\def\1{{\hbox{\bf 1}}}
\def\R{{\hbox{\bf R}}}
\def\I{{\hbox{\bf I}}}
\def\F{{\hbox{\bf F}}}
\def\K{{\hbox{\bf K}}}
\def\Trace{{\hbox{\bf Trace}}}
\font \roman = cmr10 at 10 true pt
\def\be#1{ \begin{equation}\label{#1} }
\def\bas{\begin{align*}}
\def\eas{\end{align*}}
\def\bi{\begin{itemize}}
\def\ei{\end{itemize}}
\def\dist{{\hbox{\roman dist}}}
\def\Z{{\hbox{\bf Z}}}
\newenvironment{proof}{\noindent {\bf Proof} }{\endprf\par}
\def \endprf{\hfill  {\vrule height6pt width6pt depth0pt}\medskip}
\def\emph#1{{\it #1}}
\def\textbf#1{{\bf #1}}
\def\Root{\rm {Root}}
\theoremstyle{plain}
  \newtheorem{theorem}[subsection]{Theorem}
  \newtheorem{lemma}[subsection]{Lemma}
  \newtheorem{corollary}[subsection]{Corollary}
  \newtheorem{question}[subsection]{Question}
\theoremstyle{remark}
  \newtheorem{remark}[subsection]{Remark}
\theoremstyle{definition}
  \newtheorem{definition}[subsection]{Definition}
\begin{document}
\title[Sum-product estimates  via directed expanders]
{ Sum-product  estimates  via directed expanders}

\author{Van Vu}
\address{Department of Mathematics, Rutgers, Piscataway, CA 08854}
\email{vanvu@@math.rutgers.edu}
\thanks{The author is supported by an NSF Career Grant.}

\begin{abstract} Let $\F_q$ be a finite field of order $q$
and $P$ be a polynomial in $\F_q[x_1, x_2]$. For a set $A \subset
\F_q$, define $P(A):=\{P(x_1, x_2) | x_i \in A \}$.  Using certain
constructions of expanders, we characterize all polynomials $P$ for
which the following holds

\vskip2mm \centerline{\it If $|A+A|$ is small, then $|P(A)|$ is
large.} \vskip2mm

\noindent The case $P=x_1x_2$ corresponds to the well-known
sum-product problem.
\end{abstract}

\maketitle

\section{Introduction}

Let $Z$ be a ring and $A$ be a finite subset of $Z$. The {\it
sum-product} phenomenon, first investigated in \cite{ESz}, can be
expressed as follows

\vskip2mm

\centerline{\it If $|A+A|$ is small, then $|A \cdot A|$ is large.
$(\ast)$}

The earlier works focused on the case $Z$ is $\R$ or $\Z$. In the
last few years, starting with the paper \cite{BKT}, the case when
$Z$ is a finite field or a modular ring has been studied
extensively, leading to many important contributions in various
areas of mathematics (see \cite{Bour2} for a partial survey).

One of the main applications of sum-product estimates is new
constructions of expanders (see, e.g., \cite{BG}). In this paper,
we investigate the reversed direction and derive sum-product
estimates  from certain constructions of expanders. In fact, our
arguments lead to more general results, described below.

Let $\F_q$ be a finite field and $P$ be a polynomial in $\F_q[x_1,
x_2]$. For a set $A \subset \F_q$, define $P(A):=\{P(x_1, x_2) | x_i
\in A \}$. As a generalization of $(\ast)$ (which is the case
$P=x_1x_2$), it is tempting  to consider the following statement

\vskip2mm

\centerline{\it If $|A+A|$ is small, then $|P(A)|$ is large.
$(\ast')$}

\vskip2mm

A short consideration reveals, however, that  $(\ast')$ does not
hold for some classes of polynomials. For instance, if $P$ is linear
and $A$ is an arithmetic progression, then both $|A+A|$ and $|P(A)|$
are small.

{\bf Example.} Set $P_1:=2x_1 + 3x_2$. Let $A=\{1, \dots, n \}
\subset \F_q$, where $q$ is a prime and  $1 \le n \ll q$. Then
$|A+A| =2n-1$ and $|P_1(A)| = 5n-4$.

More generally, if $A$ is an arithmetic progression or  a
generalized arithmetic progression and $Q$ is a polynomial in one
variable and $L$ is a linear form, then both $|A+A|$ and $|P(A)|$
can be  small   for $P:=Q(L(x_1,x_2))$.

{\bf Example.} Set $P_2:=(2x_1 + 3x_2)^2 - 5(2x_1 + 3x_2) +3$. Let
$A=\{1, \dots, n \} \subset \F_q$, where $q$ is a prime and  $1
\le n \ll q$. Then $|A+A| =2n-1$ and $|P_2(A)| =  |P_1(A)| =
5n-4$. In this case, $Q= z^2 -5z+3$ and $L= P_1= 2x_1 +3x_2$.

Our main result shows that $P:=Q(L(x_1,x_2))$ is the {\it only}
(bad) case where the more general phenomenon $(\ast)'$ fails.

\vskip2mm

 \begin{definition} \label{def:deg} A polynomial $P \in \F_q[x_1,x_2]$ is {\it degenerate}
 if it is  of the form
$Q(L(x_1, x_2))$ where $Q$ is an one-variable polynomial and $L$ is
a linear form in $x_1,x_2$. \end{definition}

The following refinement of $(\ast')$ holds

\vskip2mm

\centerline{\it If $|A+A|$ is small and $P$ is non-degenerate,
then $|P(A)|$ is large. $(\ast \ast)$}

\vskip2mm

\begin{theorem} \label{theorem:1} There
is a positive constant $\delta$ such that the following holds. Let
$P$  be a non-degenerate polynomial of degree $k$ in
$\F_q(x_1,x_2]$. Then for any $A \subset \F_q$

$$\max \{|A+A|, |P(A)| \} \ge  |A| \min \{
\delta (\frac{|A|^2}{k^4q})^{1/4}, \delta ( \frac{q}{k|A|})^{1/3} \}
.
$$
\end{theorem}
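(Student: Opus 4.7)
The plan is to recast the statement as a point--curve incidence bound in $\F_q^2$ and appeal to a Szemer\'edi--Trotter-type estimate over $\F_q$ --- the ``directed expander'' of the title. Set $s := |A+A|$, $p := |P(A)|$, and consider the map $\Phi : A \times A \to (A+A) \times P(A)$ defined by $\Phi(a,b) := (a+b,\,P(a,b))$. Non-degeneracy of $P$ enters purely to bound the fibres of $\Phi$: for every $u$, the one-variable polynomial $P(x,\,u-x)$ has degree $\le k$ in $x$ and is not identically constant. Indeed, if $P(x,\,u-x)$ were constant in $x$ for every $u$, then $\partial_x P = \partial_y P$ on $\F_q^2$, forcing $P = Q(x+y)$ and contradicting non-degeneracy; the exceptional set of $u$ where $P(x,u-x)$ happens to be constant has size $O(k)$ and contributes negligibly. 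Hence $|\Phi^{-1}(u,v)| \le k$ for all $(u,v)$ outside a small exceptional set.

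I next reformulate the problem geometrically. Let $\CP := A \times A \subset \F_q^2$ be the point set, $\CL := \{x+y = u\}_{u \in A+A}$ the line family, and $\CC := \{P(x,y) = v\}_{v \in P(A)}$ the algebraic curve family. Every point of $\CP$ lies on exactly one line of $\CL$ and exactly one curve of $\CC$. The joint energy
\[
 N := \#\{((a,b),(a',b')) \in A^4 : a+b=a'+b',\ P(a,b)=P(a',b')\} = \sum_{(u,v)} |\Phi^{-1}(u,v)|^2
\]
satisfies $N \ge |A|^4/(sp)$ by Cauchy--Schwarz over the at most $sp$ realized pairs, and $N \le k|A|^2$ by the fibre bound, which already yields the trivial estimate $sp \ge |A|^2/k$ and hence $\max\{s,p\} \ge |A|/\sqrt{k}$.

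To go beyond this trivial bound I replace the crude fibre count by an $\F_q$ incidence estimate applied to the configuration $(\CP,\CL \cup \CC)$. The Bourgain--Katz--Tao / Garaev-style bound naturally splits into two regimes corresponding to the two terms of the theorem: one regime yields the $(|A|^2/k^4 q)^{1/4}$ contribution via an $L^2$ estimate coming from Weil-type character sum bounds, while the complementary Szemer\'edi--Trotter regime produces the $(q/k|A|)^{1/3}$ contribution with the characteristic $2/3$ exponent. Taking the minimum of the two and combining with the lower bound $N \ge |A|^4/(sp)$ from the previous paragraph yields the stated inequality.

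The main obstacle is the verification of the $\F_q$ incidence bound for the specific coupled families $\CL$ and $\CC$. One has to check that non-degeneracy of $P$ prevents pathological configurations --- no common components between members of $\CL$ and $\CC$, no large pencils of curves through a single point, no over-concentration of incidences along a single curve --- so that the Weil / Bombieri character-sum estimates underlying the incidence bound do not lose their main term. This step is where the ``directed expander'' language of the title is most directly invoked: the bipartite graph in which $\ell_u \in \CL$ is joined to $\gamma_v \in \CC$ whenever $\ell_u \cap \gamma_v$ meets $A \times A$ must behave like the edge set of a pseudorandom graph, and non-degeneracy of $P$ is the algebraic guarantee that it does.
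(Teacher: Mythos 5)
Your reduction to an incidence problem is a reasonable opening move, but the argument has a genuine gap precisely where the content of the theorem lives: the two-regime incidence estimate for the point set $A\times A$ against the curve family $\{P=v\}$ is never stated, let alone proven. You invoke ``a Bourgain--Katz--Tao / Garaev-style bound'' that ``naturally splits into two regimes,'' assert that one regime gives the $(|A|^2/k^4q)^{1/4}$ term and the other the $(q/k|A|)^{1/3}$ term, and then concede in your final paragraph that verifying this bound for these specific coupled families is ``the main obstacle.'' That obstacle \emph{is} the theorem. The paper supplies the missing mechanism in three concrete steps: (i) for each $a$ such that $P-a$ has no linear factor, the Cayley digraph $G_a$ on $\F_q^2$ with connection set $\Root(P-a)$ is normal and satisfies $\lambda(G_a)=O(k^2q^{1/2})$, via Katz's bound on the exponential sum $\sum_{x\in\Root(P-a)}e(x\cdot y)$; (ii) non-degeneracy guarantees that at most $k-1$ values of $a$ are bad, so the union $H_0$ of the bad graphs has out-degree $O(k^2q)$; (iii) an expander mixing lemma for normal digraphs, applied to the decomposition of the complete digraph on $\F_q^2$ into the $G_a$, counts how many distinct values of $a$ (i.e.\ elements of $P(A)$) must occur among the $\ge|A|^4$ edges of a suitable graph $K$ from $B=A\oplus A$ to $C=(A+A)\oplus(A+A)$. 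None of these ingredients, nor any substitute for them, appears in your write-up; what you do prove rigorously (the fibre bound plus Cauchy--Schwarz) yields only $\max\{|A+A|,|P(A)|\}\gg|A|/\sqrt{k}$, which you correctly label trivial and which is weaker than the theorem throughout its nontrivial range $k^2q^{1/2}\ll|A|\ll q/k$.

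A secondary issue: even granting an incidence theorem for points and the curves $\{P=v\}$, your setup couples it to the lines $\{x+y=u\}$ through the energy $N$, and you do not explain how a point--curve incidence bound converts into a lower bound on $\max\{s,p\}$ rather than merely on the product $sp$ or on $N$; the paper sidesteps this by working in $\F_q^2$ from the start and encoding the additive structure in the host graph $K$ rather than in a second family of curves. Also, your claim that $\partial_xP=\partial_yP$ forces $P=Q(x+y)$ is delicate in characteristic $p$, and the exceptional set of $u$ with $P(x,u-x)$ constant is more naturally bounded by $O(k^2)$ than $O(k)$ (at most $k-1$ admissible constants, each with at most $k$ linear factors); these are minor, however, compared with the missing incidence estimate.
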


\begin{remark} \label{remark:2} The estimate in Theorem
\ref{theorem:1} is non-trivial when $k^2q^{1/2} \ll |A| \ll q/k$. In
the case when $P$ has fixed degree, this means  $q^{1/2} \ll |A| \ll
q$. This assumption is necessary as if $A$ is a subfield of size $q$
or $q^{1/2}$ then $|A+A|=|A|$ and $|P(A)|$ is at most $|A|$.
\end{remark}

\begin{remark} \label{remark:1} Since  $P=x_1x_2$ is clearly non-degenerate, we
obtain the following sum-product estimate, reproving a result from
\cite{Iosevich}

$$\max \{|A+A|, |A \cdot A| \} \ge  |A| \min \{
 \delta (\frac{|A|^2}{q})^{1/4}, \delta (\frac{q}{|A|})^{1/3} \}.
$$

\end{remark}

 Our arguments can be extended to  modular rings. Let $m$ be a large integer and
  $\Z_m$ be the ring modular $m$. Let $\gamma(m)$ be the smallest prime
divisor of $m$ and $\tau(m)$ be the number of divisors of $m$.
Define $g(m):= \sum_{ n | m} \tau (m) \tau (m/n)$.

\begin{theorem}\label{theorem:2} There is a positive constant $\delta$ such that the following holds.
 Let $A$ be a subset of $\Z_m$. Then
$$\max \{|A+A|, |A \cdot A| \}  \ge  |A| \min \{
\delta \frac{\gamma (m)^{1/4} |A|^{1/2}} {m^{1/2} }, \delta
(\frac{m}{|A|})^{1/3} \}.
$$

\end{theorem}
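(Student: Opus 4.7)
The plan is to repeat the proof of Theorem \ref{theorem:1} in the special case $P(x_1,x_2) = x_1 x_2$, replacing the finite field $\F_q$ by the ring $\Z_m$. That argument constructs a directed Cayley-type graph (or auxiliary bipartite incidence graph) whose generators encode addition and multiplication by elements of $A$, and then derives the sum-product inequality from the spectral gap of this graph via an expander mixing / incidence-counting argument. Over $\Z_m$ the combinatorial half of the argument goes through verbatim: one still counts the same quadruples linking $A+A$ with $A \cdot A$, and the trivial lower bound coming from the diagonal contribution of $A \times A \times A$ is unchanged.

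The spectral estimate is the place where the ring structure intervenes, and this is the step that must be redone. Eigenvalues of the graph are given by character sums over $\Z_m$; by the Chinese remainder theorem any such sum decomposes as a product of analogous sums over the prime-power components $\Z_{p^a}$ of $\Z_m$, and each factor enjoys square-root cancellation (Weil/Deligne for the prime-power modulus). The worst characters are those of small conductor --- concretely, characters trivial on the largest component, of order dividing $m/\gamma(m)$ --- whose contribution is of the order $(m/\gamma(m))^{1/2}$. Summing over characters grouped by their conductor introduces precisely the divisor-type quantities $\tau(m)$ and $g(m)$ appearing in the statement, and these are absorbed into the implicit constant $\delta$.

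Plugging this spectral bound into the expander mixing lemma yields an incidence inequality of exactly the same shape as in the proof of Theorem \ref{theorem:1}, except that the error term carries $(m/\gamma(m))^{1/2}$ in place of $q^{1/2}$. Solving the resulting inequality for $\max\{|A+A|,|A \cdot A|\}$ produces the two branches of the minimum: the error term is responsible for the branch $\delta \gamma(m)^{1/4} |A|^{1/2}/m^{1/2}$ (this is precisely where $\gamma(m)$ enters, via the $(m/\gamma(m))^{1/2}$ spectral gap), while the main term gives the branch $\delta (m/|A|)^{1/3}$. The principal obstacle is the spectral estimate over $\Z_m$: unlike the prime-field case, which is a single invocation of the Weil bound, the composite case demands a decomposition by conductor and careful bookkeeping of the conductor sums in order to recover $\gamma(m)$ as the correct parameter. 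Once this is in place, the remainder is a routine transcription of the proof of Theorem \ref{theorem:1}.
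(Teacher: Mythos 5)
Your overall strategy is the paper's: Theorem \ref{theorem:2} is indeed obtained by rerunning the proof of Theorem \ref{theorem:1} for $P=x_1x_2$ over $\Z_m$, the only new ingredient being a spectral gap for the Cayley graphs $G_a$ on $\Z_m^2$ generated by $\Root(Q-a)$ with $Q=2x_1x_2\in\Omega$; this is exactly Theorem \ref{theorem:gap2}, which the paper substitutes for Corollary \ref{cor:exponential2}. You correctly locate the spectral estimate as the step that must be redone, but your sketch of it is wrong in the one place that matters. The correct bound is $\lambda(G_a)\le g(m)\,m/\gamma(m)^{1/2}$, not $(m/\gamma(m))^{1/2}$; these differ by a factor of order $\sqrt m$. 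The extremal characters $\xi$ are those nontrivial \emph{only} modulo the smallest prime factor $\gamma(m)$: on every component where $\xi$ vanishes the fiber $\{Q=a\}$ contributes its full cardinality with no cancellation, giving roughly $(m/\gamma(m))\cdot\gamma(m)^{1/2}=m/\gamma(m)^{1/2}$. Your identification of the worst characters as those ``trivial on the largest component'' is backwards, and your own arithmetic betrays the error: substituting $(m/\gamma(m))^{1/2}$ for $q^{1/2}$ in the first branch of Theorem \ref{theorem:1} would yield $\gamma(m)^{1/4}|A|^{1/2}/m^{1/4}$, not the stated $\gamma(m)^{1/4}|A|^{1/2}/m^{1/2}$; the stated branch corresponds precisely to $\lambda\approx m/\gamma(m)^{1/2}$.

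A second gap is the assertion that each prime-power factor of the character sum ``enjoys square-root cancellation.'' This fails on any component where $a$ reduces to $0$: there the fiber $\{Q=0\}$ degenerates to a union of lines, and the local character sum can be as large as the full local fiber size, with no cancellation. Handling this is the real work, and the paper does not use a bare CRT factorization: it completes the sum over a dual variable $v$, evaluates the $x$-sum via the Gauss sum (Theorem \ref{theorem:exponential1}), bounds the sum over $v\in\Z_m^{\ast}$ by the Kloosterman bound with the gcd factor $(a,b,m)^{1/2}\le (m/\gamma(m))^{1/2}$ (Theorem \ref{theorem:kloosterman}), and treats non-unit $v$ by inclusion--exclusion over divisors of $m$ plus induction on $m$, which is where $g(m)$ enters. (This route also needs $m$ odd and $Q$ diagonalizable with unit coefficients, restrictions you do not mention.) So the reduction of Theorem \ref{theorem:2} to a spectral gap is the right move and matches the paper, but the spectral gap itself --- the only nontrivial new content --- is not established by your argument.
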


\begin{remark} \label{remark:3} This theorem is effective when
$m$ is the product of few large primes.
\end{remark}

Our study was motivated by two papers \cite{Terras} and
\cite{Iosevich}. In these papers, the authors used an argument based
on  Kloosterman sums estimates to study Cayley graphs and the
sum-product problem, respectively. Our approach here relies on a
combination of a generalization of this argument and the spectral
method from graph theory.

\section {Erd\"os' distinct distances problem}

The following question, asked by Erd\"os in the 1940's \cite{Erdos},
is among the most well known problems in discrete geometry

\begin{question} \label{question:distance}
What is the minimum number of distinct distances (in euclidean
norm)  determined by $n$ points on the plane ?
\end{question}

For a point set $A$, we denote by $\Delta(A)$ the set of distinct
distances in $A$. It is easy to show that $|\Delta (A)| = \Omega
(|A|^{1/2})$. To see this, consider an arbitrary point $a \in A$.
If from $a$ there are $|A|^{1/2}$ different distances, then we are
done. Otherwise, by the pigeon hole principle, there is a circle
centered at $a$ containing at least $|A|^{1/2}$ other points. Take
a point $a' $ on this circle. Since two circles intersect in at
most $2$ points, there are at least $\frac{|A|^{1/2} -1}{2}$
distinct distances from $a'$ to the other points on the circle.

It has been conjectured that $|\Delta (A)| \ge |A|^{1-o(1)}$ (the
$o(1)$ term is necessary as shown by the square grid). This
conjecture is still open. For the state of art of this problem, we
refer to \cite[Chapter 6]{TVbook}.

What happens if one replaces the euclidean distance by  other
distances ? One can easily see that for the $l_1$ distance, the
conjectured bound
 $|\Delta (A)| \ge |A|^{1-o(1)}$ fails, as the square grid determines only $|A|^{1/2}$ distances.
 On the other hand, it seems reasonable to think that there is no
 essential difference between the $l_2$ and (say) the $l_4$ norms. In
 fact, in \cite{Taostudent}, it was shown that certain arguments used to
 handle the $l_2$ case can be used, with some more care,
 to handle a wide class of other distances.

The finite field version of Erd\"os problem was first considered
in \cite{BKT}, with the euclidean distance. Here we extend this
work for a general distance. Let $P$ be a symmetric polynomial in
two variables. (By symmetry, we mean that $P$ is symmetric around
the origin, i.e., $P(x,y)=P(-x,-y)$.)  Define the $P$-distance
between two points $x=(x_1,x_2)$ and $y=(y_1,y_2)$ in the finite
plane $\F_q^2$ as $P(y_1-x_1, y_2-x_2)$. Let $\Delta_P(A)$ be the
set of distinct $P$-distances in $A$.

\begin{theorem} \label{theorem:P-distances}
There is a positive constant $\delta$ such that the following holds.
Let $P$ be a symmetric non-degenerate polynomial of degree $k$ and
$A$ be a subset of the finite plane $\F_q^2$, then

$$|\Delta_P (A)| \ge \delta \min \{ \frac{|A|} {k^2\sqrt q}, \frac{q}{k} \}. $$

\end{theorem}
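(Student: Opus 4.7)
The plan is to apply the expander mixing (spectral) method to a family of distance Cayley graphs on $\F_q^2$. For each $d\in \F_q$ set $S_d := \{v\in \F_q^2 : P(v)=d\}$ and let $G_d$ be the Cayley graph on the additive group $\F_q^2$ with connection set $S_d$. The symmetry hypothesis $P(v)=P(-v)$ gives $S_d=-S_d$, so $G_d$ is undirected. Counting ordered pairs in $A\times A$ by their $P$-distance gives
\begin{equation*}
|A|^2 \;=\; \sum_{d\in \Delta_P(A)} e_{G_d}(A,A) \;\le\; |\Delta_P(A)|\cdot \max_{d} e_{G_d}(A,A),
\end{equation*}
so it suffices to bound $e_{G_d}(A,A)$ uniformly in $d$.

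Since $G_d$ is an abelian Cayley graph, its eigenvalues are exactly the Fourier coefficients $\sum_{v\in S_d}\chi(v)$ indexed by additive characters $\chi$ of $\F_q^2$. The trivial character contributes $|S_d|\le kq$, since on each vertical line $x_1=c$ the polynomial $P(c,x_2)-d$ has at most $k$ roots in $x_2$. The key technical step is the Weil-type estimate
\begin{equation*}
\lambda_d \;:=\; \max_{\chi\ne 1} \Bigl|\sum_{v\in S_d} \chi(v)\Bigr| \;\le\; C\,k^2\sqrt q.
\end{equation*}
This is exactly where non-degeneracy enters: a nontrivial character of $\F_q^2$ has the form $\chi(v)=\psi(L(v))$ for a nonzero linear form $L$ and a nontrivial additive character $\psi$ of $\F_q$. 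If $P$ were degenerate, $P=Q(L_0)$, then $S_d$ would be a union of parallel lines $L_0=\alpha_i$, and the character $\psi\circ L_0$ would produce a sum of order $q$, ruining the bound. Non-degeneracy rules this out and reduces the estimate to a Bombieri/Weil bound for character sums along a plane curve of degree $k$.

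Plugging the eigenvalue estimates into the expander mixing lemma,
\begin{equation*}
e_{G_d}(A,A) \;\le\; \frac{|S_d|\,|A|^2}{q^2} + \lambda_d |A| \;\le\; \frac{k|A|^2}{q} + C\,k^2\sqrt q\, |A|,
\end{equation*}
and substituting into the pigeonhole inequality above yields
\begin{equation*}
|\Delta_P(A)| \;\ge\; \frac{|A|}{k|A|/q \,+\, Ck^2\sqrt q}.
\end{equation*}
Splitting on which of the two terms in the denominator dominates (i.e.\ comparing $|A|/q$ with $Ck\sqrt q$) recovers the two branches of the claim and yields $|\Delta_P(A)|\ge \delta\min\{|A|/(k^2\sqrt q),\; q/k\}$.

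The main obstacle is the uniform Weil estimate $\lambda_d\le Ck^2\sqrt q$ over \emph{all} $d\in\F_q$ and all nontrivial characters. For generic $d$ the locus $P-d=0$ is an irreducible plane curve of degree $k$ and Bombieri's theorem applies directly; however one must also handle the exceptional $d$ at which $P-d$ factors reducibly or acquires singular components. The hope is that non-degeneracy of $P$ forces such bad $d$ to be $O_k(1)$ in number, so that either a separate spectral argument or the trivial bound $|S_d|\le kq$ (used only for these few $d$) suffices to absorb them without damaging the final estimate. Securing this uniform control --- and confirming that non-degeneracy really prevents the emergence of a linear component of $P-d$ aligned with any given linear form $L$ --- is the crux of the argument.
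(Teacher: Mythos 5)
Your approach is the same as the paper's: decompose (implicitly) the complete digraph on $\F_q^2$ into the Cayley graphs $G_d$ with connection sets $S_d=\{P=d\}$, bound the nontrivial eigenvalues by a Weil-type estimate $O(k^2\sqrt q)$, apply the expander mixing lemma to $e_{G_d}(A,A)$, and pigeonhole $|A|^2$ over the distances; the arithmetic at the end matches the paper's. The difference is that you leave the crux open, and as a result your displayed chain of inequalities is not yet correct as stated.

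The gap is the uniform bound $\lambda_d\le Ck^2\sqrt q$ for \emph{all} $d$, which is in fact false even for non-degenerate $P$: take $P=x_1x_2$ and $d=0$, so that $S_0$ is the union of the two coordinate axes and the character $\psi(x_1)$ sums to order $q$ on it. Hence $\max_d e_{G_d}(A,A)$ in your pigeonhole can be as large as $|A|^2$ and the inequality as written gives nothing. Two facts repair this, and both are what the paper supplies. First, the exponential-sum input (Katz, \emph{Sommes exponentielles}, Thm.~5.1.1) requires only that $P-d$ contain no \emph{linear} factor --- irreducibility and smoothness are not needed --- so reducible or singular fibres whose components all have degree $\ge 2$ are already covered; your worry about "generic $d$" versus reducible $d$ is a red herring. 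Second, and this is the real content of non-degeneracy, the paper proves an algebraic lemma: if $P-d$ acquires a linear factor $L_d$ for several values of $d$, the corresponding linear forms must be pairwise without common zeros, hence translates $L-b_d$ of a single form $L$; if this happened for $k$ values of $d$ one could peel off factors inductively and write $P=Q(L)$, contradicting non-degeneracy. So there are at most $k-1$ bad $d$, contributing at most $(k-1)\cdot kq\cdot|A|=O(k^2q|A|)$ ordered pairs, which you subtract from $|A|^2$ before pigeonholing over the good $d$ (this also forces the implicit hypothesis $|A|\gg k^2q$, under which the subtracted term is negligible). With these two pieces your argument closes and coincides with the paper's; without them the "key technical step" is an unproved --- indeed false as stated --- assertion.
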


\begin{remark}
The polynomial $P= x^p + y^p$, which corresponds to the $l_p$
norm, is non-degenerate for any positive integer $p \ge 2$.
\end{remark}

\begin{remark} Assume that $k=O(1)$.
For $|A| \gg q$, the term $ \frac{|A|} {\sqrt q} \gg |A|^{1/2}$, and
so $|\Delta_P(A)| \gg |A|^{1/2}$. If $A| \le q$, one cannot expect a
bound better than $|A|^{1/2}$, as $A$ can be a sub-plane.
\end{remark}

\begin{remark}
The proof also works for a non-symmetric $P$. In this case,
$\dist(x,y)$ and $\dist(y,x)$ may be different.
\end{remark}

\section {Directed expanders and spectral gaps}

Let $G$ be a $d$-regular graph on $n$ vertices and $A_G$ be the
adjacency matrix of $G$. The rows and columns of $A_G$ are indexed
by the vertices of $G$ and the entry $a_{ij}=1$ if $i$ is adjacent
to $j$ in $G$ and zero otherwise. Let $d=\lambda_1 (G) \ge \lambda_2
(G) \ge \dots \ge \lambda_n (G)$ be the eigenvalues of $A_G$. Define

$$\lambda(G):= \max \{|\lambda_2|, |\lambda_n| \}. $$

It is well known that if $\lambda(G)$ is significantly less than
$d$, then $G$  behaves like a random graphs (see, for example,
\cite{CGW} or \cite{AS}). In particular, for any two vertex sets
$B$ and $C$

$$|e(B,C) - \frac{d}{n} |B||C| | \le \lambda(G) \sqrt {|B||C|}. $$

\noindent where $e(B,C)$ is the number of edges with one end point
in $B$ and the other in $C$.

We are going to develop a directed version of this statement. Let
$G$ be a directed graph (digraph) on $n$ points where the out-degree
of each vertex is $d$. The adjacency matrix $A_G$ is defined as
follows: $a_{ij}=1$ if there is a directed edge from $i$ to $j$ and
zero otherwise. Let $d= \lambda_1 (G), \lambda_2 (G), \dots,
\lambda_n (G)$ be the eigenvalues of $A_G$. (These numbers can be
complex so we cannot order them, but by Frobenius' theorem all
$|\lambda_i| \le d$.) Define

$$\lambda(G):= \max_{i \ge 2} |\lambda_i|. $$

An $n$ by $n$  matrix $A$ is {\it normal} if $A^{\top} A= A
A^{\top}$. We say that a digraph is normal if its adjacency matrix
is a normal matrix. There is a simple way to test whether a
digraph is normal. In a digraph $G$, let $N^{+}(x,y)$ be the set
of vertices $z$ such that both $xz$ and $yz$ are (directed) edges.
Similarly, let $N^{-}(x,y)$ be the set of vertices $z$ such that
both $zx$ and $zy$ are (directed) edges. It is easy to see that
$G$ is normal if and only if

\begin{equation} \label{eqn:normal} |N^{+}(x,y) | = | N^{-}(x,y) |
\end{equation}

\noindent for any two vertices $x$ and $y$.

\begin{lemma} \label{lemma:expanding}  Let $G$ be a normal directed graph on $n$ vertices with all
out-degree equal $d$.  Let $d= \lambda_1 (G), \lambda_2 (G),
\dots, \lambda_n (G)$ be the eigenvalues of $A_G$. Then for any
two vertex sets $B$ and $C$

$$\Big| e(B,C) - \frac{d}{n} |B||C| \Big| \le \lambda(G) \sqrt {|B||C|}. $$

\noindent where $e(B,C)$ is the number of (directed) edges from
$B$ to $C$.
\end{lemma}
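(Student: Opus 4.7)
My plan is to mimic the classical expander mixing lemma proof, with the crucial observation that normality of $A_G$ provides the unitary diagonalizability that is ordinarily supplied by symmetry in the undirected case. First I would invoke the spectral theorem for normal matrices to obtain an orthonormal basis $v_1,\dots,v_n$ of $\C^n$ with $A_G v_i=\lambda_i v_i$. Because every out-degree equals $d$, we have $A_G\1=d\1$, so one can take $v_1=\1/\sqrt n$ and $\lambda_1=d$; normality also forces $A_G^{\top}\1=d\1$, which is consistent with the in-degree equaling the out-degree (a property already implicit in condition (\ref{eqn:normal})).

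Next I would expand the indicator vectors in this eigenbasis:
\[
\1_B=\sum_{i=1}^n \alpha_i v_i,\qquad \1_C=\sum_{i=1}^n \beta_i v_i.
\]
The first coefficients are $\alpha_1=\langle v_1,\1_B\rangle=|B|/\sqrt n$ and $\beta_1=|C|/\sqrt n$, while Parseval's identity gives $\sum_i|\alpha_i|^2=|B|$ and $\sum_i|\beta_i|^2=|C|$. I would then note the identity $e(B,C)=\1_B^{\top} A_G\1_C$, which follows since $(A_G\1_C)_i$ counts the out-neighbors of $i$ lying in $C$. Because $\1_B$ is real, $\1_B^{\top}=\1_B^{*}$, so this quantity equals the Hermitian inner product $\langle \1_B, A_G\1_C\rangle$.

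Substituting the eigen-expansions and using orthonormality, one obtains
\[
e(B,C)=\sum_{i=1}^n \overline{\alpha_i}\,\beta_i\,\lambda_i
      = \frac{d\,|B|\,|C|}{n} + \sum_{i\ge 2}\overline{\alpha_i}\,\beta_i\,\lambda_i.
\]
The first piece matches the main term in the lemma. For the error, $|\lambda_i|\le \lambda(G)$ for $i\ge 2$, so by Cauchy--Schwarz,
\[
\Big|\sum_{i\ge 2}\overline{\alpha_i}\,\beta_i\,\lambda_i\Big|
   \le \lambda(G)\Big(\sum_{i\ge 2}|\alpha_i|^2\Big)^{1/2}\Big(\sum_{i\ge 2}|\beta_i|^2\Big)^{1/2}
   \le \lambda(G)\sqrt{|B|\,|C|},
\]
which is the required bound.

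The only delicate point, and where the hypothesis of normality is genuinely used, is the very first step: without normality the adjacency matrix need not be unitarily diagonalizable, the eigenvectors need not be orthogonal, and Parseval's identity fails. Everything else is bookkeeping, with slight care needed because eigenvectors and coefficients are complex; one must use the Hermitian inner product rather than the real bilinear form and keep track of complex conjugates, but since the final estimate is in absolute value this causes no problem.
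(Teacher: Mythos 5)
Your proof is correct and follows essentially the same route as the paper: both rest on the spectral theorem for normal matrices supplying an orthonormal eigenbasis containing $\1/\sqrt n$, followed by Cauchy--Schwarz to control the contribution of the eigenvalues $\lambda_i$, $i\ge 2$. The only difference is presentational: you expand both indicators $\1_B$ and $\1_C$ in the eigenbasis and evaluate the form $\1_B^{\top}A_G\1_C$ directly, whereas the paper (following Alon--Spencer) projects only $\1_C$ off $\1$, uses $\|A_G x\|\le \lambda(G)\|x\|$ for $x\perp\1$, and then sums $|N_C(v)-cd|$ over $v\in B$.
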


\begin{proof} The eigenvector of $\lambda_1=d$ is $\1$, the
all-one vector. Let $v_i$, $2 \le i \le n$, be the eigenvectors of
$\lambda_i$. A well known fact from linear algebra asserts  that
if $A$ is normal then its eigenvectors  form an orthogonal bases
of $\K^n$. It follows that   any vector $x$ orthogonal to $\1$ can
be written as a linear combination of these $v_i$. By the
definition of $\lambda$ we have that for any such vector $x$

$$\|A_g x \|^2 = <A_g x, A_G x> \le \lambda^2 \|x\|^2. $$

From here one can use  the same arguments as in the non-directed
case (following \cite{AS}) to conclude the proof. We reproduce
these arguments for the reader's convenience.

Let $V:= \{1, \dots, n \}$ be the vertex set of $G$. Set $c:=
|C|/n$ and let $x := (x_1, \dots, x_n)$ where $x_i := \I_{i \in C}
- b$. It is clear that $x$ is orthogonal to $\1$. Thus,

$$ < Ax, Ax > \le \lambda(G)^2 \|x \|^2 . $$

The right hand side is $\lambda_G^2 c(1-c) n \le \lambda_G^2 cn
=\lambda(G)^2 |C|$. The left hand side is $\sum_{v \in V}
(|N_C(v)| -cd )^2 $, where $N_C(v)$ is the set of $v' \in C$ such
that $vv'$ is an directed edge. It follows that

\begin{equation}\label{eqn:eigen1} \sum_{v \in B} (|N_C(v) - cd)^2 \le \sum_{v \in V} (|N_C(v)|
-cd )^2   \le \lambda^2 |C|. \end{equation}


 On the other hand, by the triangle inequality

\begin{equation} \label{eqn:eigen2} |e(B,C) - \frac{d}{n}|B||C| | = |e(B,C) - cd|B| | \le \sum_{v
\in B} |N_C(v)- cd |. \end{equation}

\noindent By Cauchy-Schwartz and \eqref{eqn:eigen1}, the right
hand side of \eqref{eqn:eigen2} is bounded from above by

$$\sqrt{|B|} (\sum_{ v \in B} (N_C(v)- cd)^2)^{1/2} \le \lambda
\sqrt{|B||C|},
$$

\noindent concluding the proof.
\end{proof}

Now we are ready to formalize our first main lemma:

\begin{lemma} (Expander decomposition lemma)
Let $\overrightarrow{K_n}$ be the complete digraph on $V:=\{1,
\dots, n \}$. Assume that $\overrightarrow{K_n}$  is decomposed in
to $k+1$ edge-disjoint digraphs $H_0, H_1, \dots, H_k$ such that

\begin{itemize}

\item For each  $i=1 , \dots, k $, the out-degrees in $H_i$ are
the same and at most $d$ and $\lambda(H_i) \le \lambda$.

\item The out-degrees in $H_0$ are at most $d'$.

\end{itemize}

Let $B$ and $C$ be subsets of $V$ and $K$ be a subgraph of
$\overrightarrow{K_n}$  with $L$ (directed) edges going from $B$
to $C$. Then $K$ contains edges from at least

$$\min \{ \frac{L- |B|d'}{2 \lambda \sqrt{|B||C|} }, \frac{(L- |B|d') n}{2d
|B||C|}  \}$$

\noindent different $H_i$, $i \ge 1$.
\end{lemma}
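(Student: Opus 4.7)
The plan is to bound $m$, the number of distinct graphs $H_i$ with $i\ge 1$ that contribute at least one edge to $K$, by counting how many of the $L$ edges of $K$ from $B$ to $C$ each $H_i$ can possibly absorb. The idea is to subtract off the $H_0$-contribution trivially and then apply Lemma~\ref{lemma:expanding} uniformly to each of the expanding pieces $H_1,\dots,H_k$.

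First I would dispose of $H_0$. Since every vertex of $B$ has out-degree at most $d'$ in $H_0$, the number of $K$-edges from $B$ to $C$ that lie in $H_0$ is at most $|B|d'$. Hence at least $L-|B|d'$ of the relevant edges of $K$ lie in the union $H_1\cup\cdots\cup H_k$, and these are the only edges that matter for bounding $m$ from below.

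Next I would bound, for each $H_i$ with $i\ge 1$, the total number $e_{H_i}(B,C)$ of edges from $B$ to $C$. Since $H_i$ is a normal digraph in which every out-degree equals some common value $d_i\le d$, and $\lambda(H_i)\le\lambda$, Lemma~\ref{lemma:expanding} applied to $H_i$ yields
\[
e_{H_i}(B,C)\;\le\;\frac{d_i}{n}|B||C|+\lambda\sqrt{|B||C|}\;\le\;\frac{d}{n}|B||C|+\lambda\sqrt{|B||C|}.
\]
Summing this bound over the $m$ indices $i\ge 1$ for which $H_i$ meets $K$, I get
\[
L-|B|d'\;\le\;m\left(\frac{d}{n}|B||C|+\lambda\sqrt{|B||C|}\right),
\]
so that
\[
m\;\ge\;\frac{L-|B|d'}{\tfrac{d}{n}|B||C|+\lambda\sqrt{|B||C|}}.
\]
To finish, I use the elementary estimate $\tfrac{1}{x+y}\ge\tfrac{1}{2\max(x,y)}$, which splits the denominator according to which of the two terms dominates. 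Depending on the case, this yields either $\tfrac{L-|B|d'}{2\lambda\sqrt{|B||C|}}$ or $\tfrac{(L-|B|d')n}{2d|B||C|}$, and the minimum of the two is a valid lower bound in both cases. This produces exactly the claimed bound.

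I do not expect any serious obstacle: the argument is essentially a pigeonhole on top of the expander mixing bound established in Lemma~\ref{lemma:expanding}. The only point requiring care is that the mixing lemma was stated for normal digraphs with a fixed out-degree, so one has to note that the hypothesis of the decomposition lemma (equal out-degrees in each $H_i$, together with the $\lambda$-bound) is exactly what is needed to invoke it with the common value $d_i\le d$; using $d$ in place of $d_i$ costs nothing and is what gives the clean statement.
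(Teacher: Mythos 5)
Your proposal is correct and follows essentially the same argument as the paper: subtract the at most $|B|d'$ edges absorbed by $H_0$, bound each remaining $H_i$'s contribution by $\frac{d}{n}|B||C|+\lambda\sqrt{|B||C|}$ via the mixing lemma, and divide, splitting the denominator to obtain the minimum. Your remark that the mixing lemma requires normality (which the decomposition lemma's hypotheses leave implicit) is a fair observation, but otherwise there is nothing to add.
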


\begin{proof} By the previous lemma, each $H_i$,
$1 \le i \le k$ has at most

$$ \frac{d}{n} |B||C|  + \lambda \sqrt{|B||C|} $$

\noindent edges going from $B$ to $C$. Furthermore, $H_0$ has at
most $|B|d'$ edges going from $B$ to $C$. Thus the number of
$H_i$, $i \ge 1$, having edges in $K$ is at least

$$ (\frac{d}{n} |B||C| + \lambda \sqrt {|B||C|})^{-1} ( L -  d'|B| )  \ge
\min \{ \frac{L- d'|B|}{2 \lambda \sqrt{|B||C|}}, \frac{(L- d'
|B|) n}{2d |B||C|} \}$$

\noindent completing the proof. \end {proof}

\section{Directed Cayley graphs}

Let $H$ be a finite (additive) abelian group and $ S $ be a
 subset of $H$. Define a directed
graph $G_S$ as follows. The vertex set of $G$ is $H$. There is a
direct edge from $x$ to $y$ if and only if  $y-x \in S$. It is
clear that every vertex in $G_S$ has out-degree $|S|$. (In general
$H$ can be non-abelian, but in this paper we restrict ourselves to
this case.)

Let $\chi_{\xi}$, $\xi \in H$, be the (additive) characters of $H$.
It is well known that for any $\xi \in H$, $\sum_{ s\in S}
\chi_{\xi}(s) $ is an eigenvalue of $G_S$, with respect the
eigenvector $(\chi_{\xi} (x))_{x\in H}$.

It is important to notice that the graph  $G_S$, for any $S$, is
normal, using \eqref{eqn:normal}. Indeed, for any two vertex $x$
and $y$

$$|N^{+} (x,y)| = |N^{-} (x,y)| = |(x+S) \cap (y +S)|. $$

We are going to focus on the following two cases

{\it Special case 1.} $H= \F_q^2$, with $\F_q$ being a finite
field of $q= p^r$ elements, $p$ prime.  Using $e(\alpha)$ to
denote $\exp (\frac{2\pi i}{p} \alpha)$, we have

$$\chi_{\xi} (x) = \exp (\frac{2\pi i}{p} \Trace \,\,\xi \cdot x ) =
e(\Trace \,\, \xi \cdot x), $$

\noindent where $\Trace \,\,z := z + z^p + \dots + z^{p^{r-1}}$
and
 $\xi \cdot x$ is the inner product of $\xi$ and $x$.

{\it Special case 2.}  $H= \Z_m^2$. In this case we  use
$e(\alpha)$ to denote $\exp (\frac{2\pi i}{m} \alpha)$. We have

$$ \chi_{\xi} (x) = \exp (\frac{2\pi i}{m} \xi \cdot x ) = e (\xi \cdot x ). $$

Our second main ingredient is  the following theorem, which is a
corollary of  \cite[Theorem 5.1.1]{Katz}. (We would like to thank B.
C. Ngo for pointing out this reference.)

\begin{theorem} \label{theorem:exponential2}  Let
$P$ be a polynomial of degree $k$ in $\F_q [x_1, x_2]$ which does
not contain a linear factor. Let $\Root (P)$ be the set of roots of
$P$ in $\F_q^2$. Then for any $0 \neq y \in \F_q^2$,

$$| \sum_{x \in \Root (P)} e(x \cdot y)| = O(k^2 q^{1/2 }). $$

\end{theorem}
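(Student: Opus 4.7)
The plan is to reduce the exponential sum over $\Root(P)$ to a sum of character sums over the $\F_q$-points of geometrically irreducible affine curves, and then invoke the Weil/Deligne-type bound \cite[Theorem 5.1.1]{Katz}. The character $x \mapsto e(x \cdot y)$ is a non-trivial additive character of $\F_q^2$ since $y \neq 0$, and the hypothesis that $P$ has no linear factor is precisely what ensures the character remains non-trivial on each piece after decomposition.

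First I would factor $P = \prod_i P_i$ into its geometrically irreducible components over $\overline{\F_q}$ (repeated factors may be ignored, since they do not change $\Root(P)$). Each $P_i$ cuts out an irreducible plane curve $C_i$ of degree $k_i \geq 2$, with $\sum_i k_i \leq k$. By inclusion-exclusion together with Bezout's theorem, which bounds each pairwise intersection $|C_i \cap C_j|$ by $k_i k_j$, one gets
$$\sum_{x \in \Root(P)} e(x \cdot y) \;=\; \sum_i \sum_{x \in C_i(\F_q)} e(x \cdot y) \;+\; O(k^2).$$

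Next I would verify, for each $C_i$, that the restriction of the linear form $\ell_y(x) = x \cdot y$ to $C_i$ is non-constant. Indeed, a level set of $\ell_y$ is a line, so if $\ell_y$ were constant on $C_i$ the curve would lie inside a line, forcing $k_i = 1$, contradicting the no-linear-factor hypothesis. This is exactly the non-triviality condition needed to feed into Katz's theorem, which then yields
$$\left| \sum_{x \in C_i(\F_q)} e(\ell_y(x)) \right| \;=\; O(k_i^2 \, q^{1/2}).$$
Summing over the components and using $\sum_i k_i^2 \leq (\sum_i k_i)^2 \leq k^2$ produces the claimed bound $O(k^2 q^{1/2})$; the $O(k^2)$ error from the Bezout step is absorbed.

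The main obstacle is not the overall structure but the quadratic degree dependence in Katz's bound: one must confirm that the Euler characteristic (equivalently, the sum of Swan conductors plus geometric genus contributions) controlling the relevant $\ell$-adic sheaf on $C_i$ grows at most like $k_i^2$, rather than, say, $k_i^3$. Granting the statement of \cite[Theorem 5.1.1]{Katz} in the form cited, this reduces to a bookkeeping check on the singularities of $C_i$ and the points at infinity where $\ell_y$ can be ramified; these contribute only $O(k)$ boundary terms and are negligible compared to $O(k^2 q^{1/2})$.
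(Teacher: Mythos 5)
The paper offers no proof of this statement at all --- it is asserted as a direct corollary of \cite[Theorem 5.1.1]{Katz} --- so there is nothing to compare against, and your derivation (decompose $\Root(P)$ into geometrically irreducible components, absorb into an $O(k^2)$ error the points lying on two components or on a component not defined over $\F_q$, note that $x\mapsto x\cdot y$ is non-constant on each remaining component since a component on which it is constant would be contained in a line and hence be linear, then apply the Weil/Katz bound componentwise) is the standard and correct way to extract the stated bound from that reference. The only loose end is the one you flag yourself, namely the $O(k_i^2)$ constant in the componentwise bound; this follows from the genus bound $(k_i-1)(k_i-2)/2$ for a plane curve of degree $k_i$ together with the $O(k_i)$ points at infinity, so the bookkeeping does close.
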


Given a polynomial $P$ and an element $a \in \F_q$, we denote by
$G_a$ the Cayley graph defined by the set $\Root (P-a)$. As a
corollary of the theorem above, we have

\begin{corollary} \label{cor:exponential2}   Let
$P$ be a polynomial of degree $k$ in $\F_q [x_1, x_2]$ and $a$ be an
element of $F_q$ such that $P-a$ does not contain a linear factor.
Then $\lambda (G_a) = O(k^2q^{1/2})$.
\end{corollary}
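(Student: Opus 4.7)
The plan is to combine the character-theoretic description of the eigenvalues of Cayley digraphs on $\F_q^2$ with Theorem~\ref{theorem:exponential2}, applied to the polynomial $P-a$. By definition $G_a$ is the Cayley digraph on the abelian group $H = \F_q^2$ with connection set $S := \Root(P-a)$, so the observations in the preceding subsection give a complete eigenbasis $\{(\chi_\xi(x))_{x\in\F_q^2} : \xi \in \F_q^2\}$ for the adjacency matrix of $G_a$, with eigenvalues
$$\lambda_\xi \;=\; \sum_{s \in S} \chi_\xi(s).$$
The principal eigenvalue is $\lambda_0 = |S|$ (the common out-degree), so the task reduces to bounding $|\lambda_\xi|$ uniformly over $\xi \neq 0$.

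For any nonzero $\xi$, the description of characters in Special case~1 gives $\chi_\xi(x) = e(\Trace\, \xi \cdot x)$, and hence
$$\lambda_\xi \;=\; \sum_{x \in \Root(P-a)} e(\Trace\, \xi \cdot x).$$
The polynomial $P-a$ still has degree $k$ (subtracting a constant does not affect the degree), and by hypothesis contains no linear factor. I would therefore apply Theorem~\ref{theorem:exponential2} to $P-a$ at the point $y = \xi \neq 0$ to obtain $|\lambda_\xi| = O(k^2 q^{1/2})$. Taking the maximum over $\xi \neq 0$ yields $\lambda(G_a) = O(k^2 q^{1/2})$, as required.

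The only points requiring care are bookkeeping: one must match the eigenvalue formula with the exponential sum controlled by Theorem~\ref{theorem:exponential2}, and note that $G_a$ is normal so that the character vectors genuinely form a complete eigenbasis and the collection $\{\lambda_\xi\}_\xi$ accounts for the entire spectrum of $A_{G_a}$. Normality follows at once from the identity $|N^+(x,y)| = |N^-(x,y)| = |(x+S) \cap (y+S)|$, valid for any Cayley digraph on an abelian group, which has already been verified in the preceding discussion. Consequently all the analytic work has been absorbed into Theorem~\ref{theorem:exponential2}, and the corollary is essentially a direct translation of its statement into the Cayley-graph setting; I do not foresee a substantive obstacle.
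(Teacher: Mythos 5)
Your proposal is correct and is essentially the paper's own (implicit) argument: the paper derives the corollary exactly by identifying the non-trivial eigenvalues of $G_a$ with the character sums $\sum_{x \in \Root(P-a)} e(\Trace\,\xi\cdot x)$ and invoking Theorem \ref{theorem:exponential2} for $\xi \neq 0$. Your additional remarks on normality and the completeness of the character eigenbasis are accurate and consistent with the paper's discussion of Cayley digraphs.
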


It is plausible that  a ring analogue of Theorem
\ref{theorem:exponential2} can be derived (with $\F_q$ replaced by
$\Z_m$). However, the (algebraic) machinery involved is heavy.  We
shall give a direct proof for Corollary \ref{cor:exponential2} in
the special case when  $P$ is quadratic.

Let $\Omega$ be the set of those quadratic polynomials  which (after
a proper changing of variables) can be written in the form $A_1x^2 +
A_2y^2$ with $A_1,A_2 \in \Z^{\ast}_m$, the set of elements co-prime
with $m$. (For example, both $Q=x^2+y^2$ and $Q= 2xy = (x+y)^2
-(x-y)^2$ belong to $\Omega$.) Fix a $Q$ in $\Omega $ and for each
$a \in \Z_m$ define the Cayley graph $G_a$ as before.

\begin{theorem} \label{theorem:gap2}
For any  $0 \neq a \in \Z_m$,

$$\lambda(G_a) \le  g(m) \frac{m}{ \gamma (m)^{1/2}}. $$
\end{theorem}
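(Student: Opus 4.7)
My plan is to bound every non-trivial eigenvalue of $G_a$ directly via character sums and the Weil--Estermann bound for Kloosterman sums, in the spirit of Sali\'e's evaluation of quadratic-exponential sums over composite moduli. Since $G_a$ is the Cayley graph on $(\Z_m^2,+)$ with connection set $\Root(Q-a)$, the observation preceding the theorem identifies its eigenvalues as $\lambda_\xi = \sum_{(x,y)\in \Root(Q-a)} e_m(\xi\cdot (x,y))$ for $\xi \in \Z_m^2$, where $e_n(\alpha):=\exp(2\pi i\alpha/n)$; thus $\lambda(G_a)=\max_{\xi\ne 0}|\lambda_\xi|$. After putting $Q$ in the diagonal form $A_1 x^2+A_2 y^2$, I expand $\1_{Q(x,y)=a}=m^{-1}\sum_t e_m(t(Q(x,y)-a))$ to separate the two coordinates:
$$m\,\lambda_\xi \;=\; \sum_{t\in \Z_m} e_m(-ta)\,G_1(t)\,G_2(t),\qquad G_j(t):=\sum_{x\in \Z_m} e_m\bigl(tA_j x^2+\xi_j x\bigr).$$
The $t=0$ term contributes $m\cdot \1_{\xi=0}$, which vanishes because $\xi\ne 0$.

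For $t\ne 0$ I would stratify by $d:=\gcd(t,m)$, writing $t=dt_0$ with $t_0\in (\Z/(m/d))^*$ and $m':=m/d$. Splitting the summation variable as $x=m'u+v$, $u\in \Z/d$, $v\in \Z/m'$, and collapsing the $u$-sum collapses the Gauss sum to
$$G_j(t)\;=\;d\cdot \1_{d\mid \xi_j}\cdot \sum_{v\in \Z/m'} e_{m'}\bigl(t_0 A_j v^2+(\xi_j/d)v\bigr),$$
so $G_1(t)G_2(t)$ vanishes unless $d\mid D:=\gcd(\xi_1,\xi_2,m)$; since $\xi\ne 0$, $D\le m/\gamma(m)$, restricting the relevant $d$'s. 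For $m'$ odd, $\gcd(t_0 A_j,m')=1$ (as $\gcd(A_j,m)=1$), which lets me complete the square in $v$ and evaluate each inner Gauss sum in closed form. Multiplying yields
$$G_1(t)\,G_2(t)\;=\;d^{2}m'\cdot \mathrm{(unit)}\cdot e_{m'}\bigl(-c_d\,t_0^{-1}\bigr),\qquad c_d\equiv \tfrac{1}{4}\bigl((\xi_1/d)^2 A_1^{-1}+(\xi_2/d)^2 A_2^{-1}\bigr)\pmod{m'},$$
with a unit $\epsilon_{m'}^2\bigl(\tfrac{A_1 A_2}{m'}\bigr)$ independent of $t_0$. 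The case $2\mid m'$ is handled by isolating the $2$-part of $m$ via CRT and applying the classical evaluation of $2$-power Gauss sums; this contributes at most a bounded multiplicative factor that is absorbed into $g(m)$.

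Combining with $e_m(-ta)=e_{m'}(-t_0 a)$, the contribution to $m\,\lambda_\xi$ of the block $\{t:\gcd(t,m)=d\}$ becomes $d^2 m'\cdot \mathrm{(unit)}\cdot K(a,c_d;m')$, where $K(u,w;n):=\sum_{t_0\in (\Z/n)^*} e_n(-(ut_0+wt_0^{-1}))$ is a classical Kloosterman sum. The Weil--Estermann bound $|K(u,w;n)|\le \tau(n)\sqrt{n}\,\gcd(u,w,n)^{1/2}$ then gives a per-$d$ contribution of order $\sqrt{dm}\,\tau(m/d)\,\gcd(a,c_d,m/d)^{1/2}$; summing over $d\mid D$ and using $\sqrt{d}\le \sqrt{m/\gamma(m)}$ produces the advertised factor $m/\gamma(m)^{1/2}$, while the remaining divisor-function weights (the $\tau(m/d)$ from Weil, the cardinality of the $d$-sum, and the gcd divisor sum) package into $g(m)=\sum_{n\mid m}\tau(m)\tau(m/n)$. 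The principal obstacle will be exactly this book-keeping: threading the $\gcd(a,c_d,m/d)^{1/2}$ factor from the Weil bound through the sum over $d\mid D$ so that it lands on the stated weight $g(m)$ without losing an extra power of $\gamma(m)$. A subsidiary technical wrinkle is the completion of the square when $2\mid m$, which is handled by CRT and produces only a bounded factor absorbed into $g(m)$.
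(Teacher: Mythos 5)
Your route is genuinely different from the paper's. You stratify the completed sum $m\lambda_\xi=\sum_t e_m(-ta)G_1(t)G_2(t)$ by $d=\gcd(t,m)$ and hit \emph{every} stratum with a Gauss-sum evaluation plus the Weil--Estermann bound, whereas the paper treats only the unit block $\gcd(v,m)=1$ this way and disposes of the remaining blocks by inclusion--exclusion over squarefree divisors together with induction on $m$: each block $d\mid v$ is re-identified, after essentially the same reduction you perform, as $m_d d^2$ times a non-trivial eigenvalue of the analogous Cayley graph over $\Z_{m/d}^2$, to which the inductive hypothesis is applied, and the recursion $g(m)=\tau(m)+\sum_{d\in\Omega(m)}g(m/d)$ is exactly what makes that bookkeeping close. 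Your per-stratum reduction is correct, including the important point that the Jacobi-symbol factor $\left(\tfrac{t_0}{m'}\right)^2=1$ cancels in the product $G_1G_2$, so that a genuine Kloosterman sum in $t_0$ survives.

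However, the step you defer as ``book-keeping'' is not bookkeeping; it is a real obstruction your argument cannot overcome. Since the hypothesis is only $a\neq 0$ in $\Z_m$, for a divisor $d\mid\gcd(\xi_1,\xi_2,m)$ you may well have $a\equiv 0\ (\bmod\ m/d)$, and simultaneously $c_d\equiv 0\ (\bmod\ m/d)$ whenever $\xi/d$ is an isotropic vector for $Q$ modulo $m/d$. Then your ``Kloosterman sum'' degenerates to the full sum $\phi(m/d)$, the factor $\gcd(a,c_d,m/d)^{1/2}$ equals $(m/d)^{1/2}$, and that single stratum contributes about $d^2(m/d)^2=m^2$ to $m\lambda_\xi$, i.e.\ about $m$ to $\lambda_\xi$, far above the target $g(m)m/\gamma(m)^{1/2}$. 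This is not an artifact of the estimate: for $m=p^2$ with $p\equiv 1\ (\bmod\ 4)$, $Q=x^2+y^2$, $a=p$ and $\xi=p(1,i)$ with $i^2\equiv-1\ (\bmod\ p^2)$, a direct count of the root set gives $\lambda_\xi=p(p-2)$, which exceeds $18p^{3/2}$ for large $p$. So no threading of the gcd factor will reach the stated bound; you would need the additional hypothesis $\gcd(a,m)=1$, under which $\gcd(a,c_d,m/d)=1$ for every $d$ and your computation closes immediately (indeed with a smaller weight than $g(m)$). For what it is worth, the paper's inductive route meets the same difficulty at the corresponding point: the induction is applied to $a'=a\bmod m_d$, which can vanish even though $a\neq 0$.
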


The proof of this theorem will appear in Section \ref{section:last}.


\section {Proofs of Theorems \ref{theorem:1} and \ref{theorem:2}}

To prove Theorem \ref{theorem:1}, consider a set $A \subset \F_q$
and set $B := A \oplus  A \subset \F_q^2$. Since our estimate is
trivial if $|A| =O(k^2 q^{1/2})$, we assume that $|A| \gg
k^2q^{1/2}$.

 For each $a \in \F_q$, consider the polynomial $P_a= P-a$
and define a Cayley graph $G_a$ accordingly. The out-degree in
this graph is $O(q)$. We say that an element $a$ is {\it good} if
$P-a$ does not contain a linear factor and $\it bad$ other wise.

\begin{lemma} Let  $P $ be a polynomial of degree $k$ in $\F_q[x_1, \dots, x_d]$. Assume that $P$
cannot be written in the form $P= Q(L)$, where $Q $ a polynomial
with one variable and $L$ is a linear form of $x_1, \dots, x_d$.
Then there are at most $k-1$ elements $a_i$ such that the
polynomial $P-a_i$ contains a linear factor.
\end{lemma}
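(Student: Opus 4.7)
The plan is to argue by contraposition: assume $P-a_i$ has a linear factor $\ell_i$ for $k$ distinct values $a_1,\ldots,a_k$, and deduce that $P=Q(L)$ for some one‑variable polynomial $Q$ and linear form $L$. Write each $\ell_i=L_i-\beta_i$ with $L_i$ a nonzero homogeneous linear form and $\beta_i\in\F_q$. The first step is to force the $L_i$ all to be scalar multiples of a single linear form $L$. If two of them, say $L_1$ and $L_2$, had linearly independent coefficient vectors in $\F_q^d$, then the map $x\mapsto(L_1(x),L_2(x))$ from $\F_q^d$ to $\F_q^2$ is surjective, so the affine subspace $\{L_1=\beta_1,\ L_2=\beta_2\}$ is nonempty; on it $P$ would be forced to equal both $a_1$ and $a_2$, contradicting $a_1\neq a_2$.

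Having normalized so that every linear factor has the shape $L-\beta_i$, change variables so that $L=x_1$; the $\beta_i$'s are then automatically distinct, since $\beta_i=\beta_j$ would give $P(\beta_i,x_2,\ldots,x_d)=a_i=a_j$. Expand
\[
P(x_1,\ldots,x_d)=\sum_{j=0}^{k}p_j(x_2,\ldots,x_d)\,x_1^j,
\]
so that the divisibility $(x_1-\beta_i)\mid P-a_i$ becomes the polynomial identity $P(\beta_i,x_2,\ldots,x_d)=a_i$ in $\F_q[x_2,\ldots,x_d]$. Extracting the coefficient of each monomial $x_2^{\alpha_2}\cdots x_d^{\alpha_d}$ with multi‑index $\alpha\neq 0$ yields
\[
f_\alpha(\beta_i):=\sum_{j=0}^{k}p_{j,\alpha}\,\beta_i^{\,j}=0\qquad(i=1,\ldots,k),
\]
where $p_{j,\alpha}$ denotes the coefficient of $x^\alpha$ in $p_j$.

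The decisive observation is that $p_k$ is the coefficient of $x_1^k$ in $P$; since $\deg P=k$, $p_k$ must be a scalar, so $p_{k,\alpha}=0$ for every $\alpha\neq 0$. Therefore $f_\alpha(\beta)$ is a polynomial of degree at most $k-1$ in $\beta$ that vanishes at the $k$ distinct points $\beta_1,\ldots,\beta_k$, and hence $f_\alpha\equiv 0$. This forces $p_{j,\alpha}=0$ for all $j$ and all $\alpha\neq 0$, meaning every $p_j$ is a constant. Consequently $P=\sum_j p_j\,x_1^j=Q(x_1)=Q(L)$, contradicting the hypothesis.

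The only real subtlety is the opening geometric step, where non‑parallel affine hyperplanes must intersect over $\F_q$ (which is why we need $d\geq 2$; for $d=1$ the lemma's hypothesis is vacuous, as any one‑variable polynomial already has the form $Q(L)$). After that, everything reduces to the degree‑lowering trick: killing the top coefficient $p_{k,\alpha}$ turns what could have been a degree‑$k$ polynomial in $\beta$ with $k$ roots into a degree‑$(k-1)$ polynomial with $k$ roots, which must vanish identically.
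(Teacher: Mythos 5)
Your proof is correct, and its first half coincides with the paper's: you both observe that two linear factors $L_i-\beta_i$, $L_j-\beta_j$ with non-proportional homogeneous parts would have a common zero $x$, forcing $a_i=P(x)=a_j$, so all the factors are translates of a single form $L$ (and you are right to flag that this intersection step is where $d\ge 2$ and the normalization of each factor by a unit are used). Where you diverge is in proving the key claim that a degree-$k$ polynomial which is constant on $k$ distinct level sets of $L$ must equal $Q(L)$. The paper proceeds by induction on the $x_1$-degree, repeatedly dividing by $(L-b_1)$ and showing the remainder is the appropriate constant. You instead expand $P=\sum_{j\le k}p_j(x_2,\dots,x_d)x_1^j$ after the change of variables $L=x_1$, extract for each monomial $x^\alpha$ ($\alpha\neq 0$) the univariate polynomial $f_\alpha(\beta)=\sum_j p_{j,\alpha}\beta^j$, note that the total-degree hypothesis forces $p_{k,\alpha}=0$ so that $\deg f_\alpha\le k-1$, and kill $f_\alpha$ by root counting at the $k$ distinct $\beta_i$. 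This is a genuinely different (and arguably cleaner) execution: it avoids the induction entirely, and it makes completely transparent why the threshold is $k$ values of $a$ rather than $k+1$ --- the top coefficient $p_k$ is already known to be scalar, so one only needs $k$ interpolation points to annihilate the remaining coefficients. Both arguments are elementary and both are complete; yours has the additional merit of isolating exactly where the degree bound enters.
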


\begin{proof}  Let $a_1, \dots, a_k$ be different elements of $\F_q$ such
that there are linear forms $L_1, \dots, L_k$ and polynomials
$P_1, \dots, P_k \in \F_q[x_1, \dots, x_d]$ such that $P-a_i= L_i
P_i$.

If $L_i$ and $L_j$ had a common root $x$, then $P(x)-a_i =P(x)-a_j
=0$, a contradiction as $a_i \neq a_j$. It follows that for any $1
\le i < j \le d$, $L-i$ and $L_j$ do not have  a  common root. But
since the $L_i$ are linear forms, we can conclude that they are
translates of the same linear form $L$, i.e., $L_i =L -b_i$, for
some $b_1, \dots, b_k \in \F_q$.

It now suffices to  prove the following claim

\begin{lemma} Let $P$ be a polynomial in $\F_q [x_1, \dots, x_d]$ of degree
$k$. Assume that there is a non-zero linear form $L$, a sequence
$a_1, \dots, a_k$ of (not necessarily distinct) elements of $\F_q$
and a set  $ \{b_1, \dots, b_k\} \subset \F_q$ such that
$P(x)=a_i$ whenever $L (x)=b_i$. Then there is a polynomial $Q$ in
one variable such that $P= Q(L)$.
\end{lemma}

Assume, without loss of generality, that the coefficient of $x_1$
in $L$ is non-zero. We are going to induct on the degree of $x_1$
in $P$ (which is at most $k$). If this degree is $0$ (in other
words $P$ does not depend on $x_1$), then $P$ is a constant, since
for any sequence $x_2, \dots, x_d$, we can choose an $x_1$ such
that $L(x_1, \dots, x_d)=b_1$, so

$$ P(x_1, \dots, x_d)= a_1. $$

If the degree in concern is not zero, then we can write

$$ P = (L-b_1)  P_1' + Q_1 , $$

\noindent where $Q_1$ does not contain $x_1$. By the above
argument, we can show that $Q_1=a_1$. Furthermore, if $L(x)=b_i$,
$2\le i \le k$, then $Q_1= (a_i-a_1)/(b_i-b_1)$. Now apply the
induction hypothesis on $P_1'$, whose $x_1$-degree is one less
than that of $P_1$.
\end{proof}

If $a$ is good, then $\lambda (G_a)= O(k^2q^{1/2})$. Let the graph
$H_0$ be the union of bad $G_a$. By the above lemma, the maximum
out-degree of this graph is $d' =O(k^2q)$.


In $\F_q^2$,  define a directed graph $K$ by drawing a directed edge
from $(x,y)$ to $(x',y')$ if and only if either both $x'-x$ and
$y'-y$ are in $A$ or  both $x-x'$ and $y-y'$ are in $A$. Consider
the set $C := (A+A) \oplus (A+A) \subset \F_q^2$. Notice that in $K$
any point from $B$ has  at least $|A|^{2}$ edges going into $C$.
Thus $L$, the number of directed edges from $B$ to $C$, is at least
$|A|^{4}$. Since $|A| \ge k^2q^{1/2}$, we have

$$L- |B|d' \ge |A|^{4} - |B|d' = |A|^{4} - |A|^{2} O(k^2q) =
(1-o(1))|A|^{4}. $$

Applying the Expander Decomposition Lemma and Corollary
\ref{cor:exponential2}, we can conclude
 that the number of
$P_a$ having edges from $B$ to $C$ (which, by definition of $B$
and $C$, is $|P(A)|)$, is at least

$$  \Omega \Big( \min \{ (1-o(1))\frac{ |A|^{4} }{ k^2q^{1/2} |A| |A+A|},
\frac{(1-o(1))|A|^{4} q^2} {kq |A|^{2} |A+A|^{2} } \} \Big). $$

\noindent from which the desired estimate follows by Holder
inequality. The proof of Theorem \ref{theorem:2} (using Theorem
\ref{theorem:gap2} instead of Corollary \ref{cor:exponential2}) is
similar and is left as an exercise.

To prove Theorem \ref{theorem:P-distances}, consider a set $A
\subset \F_q^2$ where $|A| \gg kq$.  Let $B=C=A$ and $K$ be the
complete digraph on $A$. We can assume that $|A\ \gg q$. We have $L
= (1+o(1))|A|^2$ and $d'= O(kq)$. Thus $L- d'|B| = L - d'|A| =
(1+o(1))L= (1+o(1))|A|^2$. By the Expander Decomposition Lemma,

$$|\Delta_P (A)| =  \Omega \Big( \min \{ (1-o(1))\frac{ |A|^{2} }{ k^2q^{1/2} |A|},
\frac{(1-o(1))|A|^{2} q^2} {kq |A|^{2}  } \} \Big). $$

\noindent The right hand side is

$$ \Omega \Big( \min \{\frac{ |A| }{ k^2q^{1/2} }, \frac{q}{k}  \} \Big), $$
completing the proof.

\section{ Proof of Theorem  \ref{theorem:gap2}}
\label{section:last}

We are going to follow an approach from \cite{Terras}. We need to
use the following two classical estimates (see, for example,
\cite[page 19]{KI})

\begin{theorem} \label{theorem:exponential1} (Gauss sum) Let $m$ be an positive odd
integer. Then for any integer $z$ co-prime to $m$

$$|\sum_{y \in \Z_m } e (zy^2)| = \sqrt m . $$

\end{theorem}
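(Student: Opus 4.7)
The plan is to evaluate $|S|^2$, where $S := \sum_{y \in \Z_m} e(zy^2)$, by the standard squaring-and-substitution trick, and show $|S|^2 = m$ directly. This sidesteps the signed sign of a Gauss sum (which would require quadratic reciprocity) and gives only its absolute value, which is what the theorem asks for.

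First I would write
\[
|S|^2 = S \overline{S} = \sum_{y,w \in \Z_m} e\bigl(z(y^2 - w^2)\bigr).
\]
Then I would make the change of variable $y = w + t$, noting that as $w$ and $t$ range over $\Z_m$, the pair $(y,w)$ ranges once over $\Z_m \times \Z_m$. Since $(w+t)^2 - w^2 = 2wt + t^2$, this yields
\[
|S|^2 = \sum_{t \in \Z_m} e(zt^2) \sum_{w \in \Z_m} e(2ztw).
\]

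Next I would evaluate the inner sum by orthogonality of additive characters on $\Z_m$. The crucial observation is that $m$ is odd and $z$ is coprime to $m$, so $2z \in \Z_m^\ast$; hence $2zt \equiv 0 \pmod m$ if and only if $t \equiv 0 \pmod m$. Therefore
\[
\sum_{w \in \Z_m} e(2ztw) =
\begin{cases} m & \text{if } t = 0, \\ 0 & \text{otherwise.} \end{cases}
\]
Substituting back, only the $t=0$ term survives, giving $|S|^2 = m \cdot e(0) = m$, and taking square roots gives $|S| = \sqrt m$.

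The only delicate point is the use of oddness of $m$: without it, $2$ would not be invertible modulo $m$, and the orthogonality relation would produce extra surviving terms (corresponding to $t = m/2$), inflating $|S|^2$. Since the hypothesis explicitly assumes $m$ odd, this step goes through cleanly, and there is no other obstacle.
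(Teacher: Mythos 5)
Your argument is correct. Note, however, that the paper itself does not prove this statement at all: it is invoked as a classical estimate with a reference to Iwaniec--Kowalski, so there is no in-paper proof to compare against. Your squaring-and-shifting computation is the standard elementary derivation of the modulus of the quadratic Gauss sum, and it uses exactly the stated hypotheses: writing $|S|^2=\sum_{t}e(zt^2)\sum_{w}e(2ztw)$, the inner sum is detected by orthogonality, and since $m$ is odd and $(z,m)=1$ the element $2z$ is invertible mod $m$, so only $t=0$ survives and $|S|^2=m$ exactly -- for every odd $m$, prime or composite, which is precisely the generality the paper needs (it applies the bound over $\Z_m$ with $m$ composite). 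A pleasant feature of your route is that it yields the exact modulus without ever touching the sign or argument of the Gauss sum, so no quadratic reciprocity or multiplicativity in $m$ is required. Your closing remark about even $m$ is also essentially right: there the solution $t=m/2$ of $2zt\equiv 0\ (\mathrm{mod}\ m)$ contributes an extra term, which is why the clean identity $|S|=\sqrt m$ fails (the sum is $0$ for $m\equiv 2\ (\mathrm{mod}\ 4)$ and has modulus $\sqrt{2m}$ for $4\mid m$), though that case is not needed here.
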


\begin{theorem} \label{theorem:kloosterman} (Kloosterman sum) Let $m$ be an positive odd
integer. Then

$$|\sum_{y \in \Z_m^{\ast} } e (ay + b \bar y)| \le \tau(m) (a,b,m)^{1/2} \sqrt m , $$

\noindent where $(a,b,m)$ is the greatest common divisor of $a,b$
and $m$ and $\bar y $ is the inverse of $y$.
\end{theorem}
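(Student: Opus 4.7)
The plan is to bound each nonzero eigenvalue
$$S(\xi) \;:=\; \sum_{(x,y) \in \Root(Q-a)} e(\xi_1 x + \xi_2 y)$$
of $G_a$ directly as an explicit character sum. Since $Q \in \Omega$, after a linear change of variables we may assume $Q(x,y) = A_1 x^2 + A_2 y^2$ with $A_1,A_2 \in \Z_m^\ast$. The first step is to detect the level set $\{Q = a\}$ by Fourier inversion on $\Z_m$, writing $\mathbf{1}[Q(x,y)=a] = m^{-1} \sum_{t \in \Z_m} e(t(Q(x,y)-a))$ and swapping sums to decouple the two coordinates:
$$S(\xi) \;=\; \frac{1}{m} \sum_{t \in \Z_m} e(-ta)\, G_1(t)\, G_2(t), \qquad G_i(t) \;:=\; \sum_{x \in \Z_m} e\!\left(t A_i x^2 + \xi_i x\right).$$

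The second step is to stratify the outer $t$-sum by $d := (t,m)$, writing $t = d t'$ with $t'$ ranging over units of $\Z_{m/d}$. For $m$ odd and $A_i \in \Z_m^\ast$, completing the square in $x$ expresses each nonvanishing $G_i(t)$ as a Gauss sum of modulus $(m/d)^{1/2}$ on the quotient ring $\Z_{m/d}$, multiplied by an explicit phase $e(-\xi_i^2\,\overline{4t'A_i})$ and by a Jacobi-symbol factor $\bigl(\tfrac{t'A_i}{m/d}\bigr)$. Taking the product $G_1(t)G_2(t)$, the $t'$-dependence of the Jacobi factors collapses (since $\bigl(\tfrac{t'^2}{m/d}\bigr)=1$), and each stratum reduces to a Kloosterman sum on $\Z_{m/d}$ of the form
$$\sum_{t' \in \Z_{m/d}^\ast} e\bigl(-t' \tilde a + b_d\,\overline{t'}\bigr),$$
with $\tilde a = a/d$ (and the stratum vanishes unless $d \mid a$) and $b_d$ an explicit function of $\xi, A_1, A_2$.

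Applying Theorem \ref{theorem:kloosterman} on $\Z_{m/d}$ bounds this Kloosterman sum by $\tau(m/d)\,(\tilde a, b_d, m/d)^{1/2}\,(m/d)^{1/2}$. Crucially, $a \ne 0$ in $\Z_m$ forces $\tilde a \ne 0$ in $\Z_{m/d}$, so $(\tilde a, m/d) \le (m/d)/\gamma(m/d) \le (m/d)/\gamma(m)$, delivering the desired saving $\gamma(m)^{-1/2}$. After combining this with the Gauss-sum moduli $(m/d)^{1/2}$ extracted from each $G_i$ and the $1/m$ prefactor, the bookkeeping per stratum yields a contribution of size at most $\tau(m)\tau(m/d)\cdot m/\gamma(m)^{1/2}$. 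Summing over divisors $d \mid m$ and using $g(m) = \sum_{n \mid m} \tau(m)\tau(m/n)$ produces the advertised bound $\lambda(G_a) \le g(m)\, m/\gamma(m)^{1/2}$.

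The main obstacle will be the stratified Gauss-sum analysis when $d > 1$: one must verify that each $G_i(t)$ really does collapse to a clean Gauss sum on $\Z_{m/d}$ of the claimed modulus, that the vanishing/divisibility obstructions on $d$ and $\xi$ are compatible with the counting, and that the Jacobi-symbol cancellation across the product $G_1 G_2$ does not leak back $t'$-dependence that would spoil the Kloosterman reduction. The hypothesis $A_1,A_2 \in \Z_m^\ast$ is used essentially in this step: it guarantees that the reductions modulo $m/d$ of the quadratic coefficients remain invertible, which is exactly what yields the full $(m/d)^{1/2}$ Gauss-sum cancellation and, through Theorem \ref{theorem:kloosterman}, the saving $\gamma(m)^{-1/2}$ that drives the whole argument.
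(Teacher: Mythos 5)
Your proposal does not actually address the statement you were asked to prove. The statement is the classical Kloosterman sum bound $|\sum_{y \in \Z_m^{\ast}} e(ay+b\bar y)| \le \tau(m)(a,b,m)^{1/2}\sqrt m$ for odd $m$; in the paper this is quoted as a known estimate (cited to Iwaniec--Kowalski, page 19) and is one of the two analytic inputs to the proof of Theorem \ref{theorem:gap2}. What you have sketched instead is a proof of Theorem \ref{theorem:gap2} itself --- the bound $\lambda(G_a) \le g(m)\,m/\gamma(m)^{1/2}$ for the Cayley graph attached to a quadratic $Q \in \Omega$ --- and, crucially, your argument \emph{invokes} Theorem \ref{theorem:kloosterman} on the quotient rings $\Z_{m/d}$ as a black box. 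As a proof of Theorem \ref{theorem:kloosterman} this is circular: the target statement appears as an ingredient of its own proof, and nothing in your text supplies independent control of the exponential sum $\sum_{y} e(ay+b\bar y)$.

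A genuine proof of the stated bound would have a completely different shape: reduce to prime power moduli by twisted multiplicativity of Kloosterman sums under the Chinese Remainder Theorem; for $p^{\alpha}$ with $\alpha \ge 2$ evaluate the sum elementarily (stationary phase / completing the square, \`a la Sali\'e), which is where the oddness of $m$ is used; and for prime modulus invoke Weil's bound $|S(a,b;p)| \le 2\sqrt p$, which requires either the Riemann hypothesis for curves over finite fields or Stepanov's method. One must also handle the common divisor $(a,b,m)$ by extracting degenerate strata, which is what produces the factor $(a,b,m)^{1/2}$, and the divisor function $\tau(m)$ arises from collecting the prime power factors. None of these steps appears in your proposal. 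For completeness: as a sketch of Theorem \ref{theorem:gap2} your outline is reasonable and close in spirit to the paper's argument, except that the paper proceeds by induction on $m$, using inclusion--exclusion over $v$ with $(v,m)>1$ and reinterpreting each such stratum as an eigenvalue of the analogous Cayley graph over $\Z_{m/d}^2$, rather than re-deriving Gauss and Kloosterman sums on each quotient directly; but that is not the theorem you were asked to prove.
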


 Let $p_1 < \dots < p_k$ be
the prime divisors of $m$ and set $\Omega (m) := \{ \prod_{i \in
I} p_i | I \subset \{1, \dots, k \}, I \neq \emptyset \}$. Notice
that $g(m)$ satisfies the following recursive formula: $g(1) :=0$
and $g(m) := \tau (m) + \sum_{d \in \Omega (m)} g(m/d)$.

 Let $S$ be the set of roots of $Q-a$. We are
going to use the notation $G_S$ instead of $G_a$.

 We use induction on $m$ to show that

$$ |\lambda (G_S)| \le g(m) \frac{m}{\gamma (m)^{1/2} }. $$

The case $m=1$ is trivial, so from now on we assume $m >1$. By
properties of Cayley's graphs, the eigenvalues of $G_S$ are

$$\lambda_{\xi} = \sum_{s \in S} e ( \xi \cdot s), $$

\noindent where $\xi \in \Z_m^2$. For $\xi=0$, we obtain the
largest eigenvalue $|S|$, which is the degree of the graph. In
what follows, we assume that $\xi \neq 0$.
 Recall that $s \in S$ if and
only if $Q(s) =a$. We have

\begin{equation} \label{equ8} m \lambda_{ \xi} = \sum_{x \in \Z_m^2} \sum_{ v \in \Z_m}
e(-av) e(  \xi \cdot x + v Q(x)) = \sum_{ v \in \Z_m \backslash
\{0\}} F(v)
\end{equation}

\noindent where $F(v):= \sum_{x \in \Z_m^2} e(-av) e( \xi \cdot x
+ v Q(x))$, taking into account the fact that $F(0)=0$.

 For $d = \prod_{i \in I} p_i \in \Omega
(m)$, let $\eta(d) = |I|+1$. By the exclusion-inclusion formula,

\begin{equation} \label{equ9} \sum_{ v \in \Z_m \backslash 0} F(v)= \sum_{ v \in
\Z_m^{\ast} } F(v)  + \sum_{d \in \Omega (m)} (-1)^{\eta(d)}
\sum_{d | v} F(v). \end{equation}

Let us first bound $S_0:= \sum_{ v \in \Z_m^{\ast} } F(v) $. We
write $x= (x_1,x_2)$ where $x_1, x_2 \in \Z_m$. As $Q$ is
non-degenerate, by changing variables we can rewrite $ e( \xi
\cdot x + v Q(x))$ as $e (v (A_1 x_1^2 + A_2 x_2^2) + (B_1x_1 +
B_2 x_2 + C))$ where
 $B_1, B_2, C$ may depend on $\xi$, but $A_1, A_2 \in \Z_m^{\ast} $ depends only
on $Q$. We have (thanks to the fact that $v, A_1, A_2, 2,4 $ are
all in $\Z_m^{\ast}$)

\begin{align*} &     v (A_1 x_1^2 + A_2 x_2^2) + (B_1x_1 + B_2 x_2 + C) \\ = &  vA_1
(x_1 + \frac{B_1}{2vA_1})^2 + vA_2 (x_2 + \frac{B_2}{2vA_2})^2 +
(C- \frac{B_1^2}{4 vA_1} - \frac{B_2^2}{4 vA_2}). \end{align*}

It follows that

\begin{align} \label{equ2} S_0=  & \sum_{v \in \Z_m^{\ast} }
e(C) e (-av - ( \frac{B_1^2}{4 A_1} + \frac{B_2^2}{4 A_2}) \bar v)
\\ \notag & \times \sum_{x_1, x_2 \in \Z_m} e( vA_1 (x_1 + \frac{B_1}{2vA_1})^2 +
vA_2 (x_2 + \frac{B_2}{2vA_2})^2 ).
\end{align}

Notice that

\begin{equation} \label{equ3}
\sum_{x_1, x_2 \in \Z_m} e( vA_1 (x_1 + \frac{B_1}{2vA_1})^2 +
vA_2 (x_2 + \frac{B_2}{2vA_2})^2 ) = \sum_{y \in \Z_m} e (vA_1
y^2) \sum_{y \in \Z_m} e(vA_2 y^2).
\end{equation}

Set $b:= \frac{B_1^2}{4 vA_1} + \frac{B_2^2}{4 vA_2}$, we have

\begin{equation} \label{equ4} S_0=  e(C) \Big( \sum_{ y \in \Z_m} e (vA_1 y^2)
\sum_{ y \in \Z_m} e (vA_2 y^2)  \Big)\sum_{v \in \Z_m^{\ast} } e
(-av - b\bar v ).
\end{equation}

By Theorems \ref{theorem:exponential1} and
\ref{theorem:kloosterman} and the fact that $(a,b,m) \le
\frac{m}{\gamma (m)}$ (since $a \neq 0$), we have

\begin{equation} \label{equS0} |S_0| \le  m \tau (m)
(a,b,m)^{1/2} m^{1/2} \le \tau (m) \frac{m^2}{ \gamma(m)^{1/2}}.
\end{equation}

Now we bound the second term in the right hand side of
\eqref{equ9}, using the induction hypothesis. Fix $d \in \Omega
(m)$ and consider

$$S_d:= \sum_{d | v} F(v) = \sum_{ x \in \Z_m^2} e( \xi \cdot x) \sum_{ d |v} e
(v(Q(x)-a)). $$

Write $m= dm_d, v= dv'$, where $m_d:= m/d$ and $v' \in \Z_{m_d}$.
Each vector $x$ in $\Z_m^2$ has a unique decomposition $x= x^{[1]} +
m_d x^{[2]}$ where $x^{[1]} \in \Z_{m_d}^2$ and $x^{[2]} \in
\Z_d^2$. Finally, there is $a' \in \Z_{m_d}$ such that $a \equiv a'$
(mod $m_d)$. Since $Q(x)\equiv Q(x^{[1]})$ (mod $m_d$), we have

$$e( v(Q(x)-a)) = \exp \big(\frac{2\pi i}{m_d} v' (Q(x^{[x_1]} - a') \big).
$$

Therefore,

\begin{equation} \label{equ10} \sum_{ d |v} e
(v(Q(x)-a)) = \sum_{v' \in \Z_{m_d}} \exp \big(\frac{2\pi i}{m_d}
v' (Q(x^{[1]} - a' \big) \end{equation}

\noindent which equals $m_d$ if $Q(x^{[1]}) \equiv a'$  (mod
$m_d$) and zero otherwise. It follows that

$$S_d  = m_d \sum_{ x \in \Z_m^2, Q(x^{[1]}) = a' (\hbox{mod} m_d) }
e (\xi \cdot x ). $$

\noindent Next, we rewrite $e(\xi \cdot x)$ as $\exp(\frac{2\pi
i}{m} (\xi \cdot x^{[1]} + m_d \xi \cdot x^{[2]})) $. This way, we
have

\begin{equation} \label{equ11}
S_d = m_d \sum_{x^{[1]} \in \Z_{m_d}^2, Q(x^{[1]}) = a'} e( \xi
\cdot x^{[1]} ) \sum_{ x^{[2]} \in \Z_d^2} \exp (\frac{ 2\pi i}{d}
\xi \cdot x_2). \end{equation}

\noindent The sum $\sum_{ x^{[2]} \in \Z_d^2} \exp (\frac{ 2\pi
i}{d} \xi \cdot x_2)$ is $d^2$ if both coordinates of $\xi$ are
divisible by $d$ and zero other wise. Set $\xi_d= \xi/d$, we have

$$S_d = m_d d^2 \sum_{ Q(x^{[1]})= a'} \exp (\frac{2\pi
i}{m_d} \xi_d \cdot x^{[1]}). $$

\noindent Notice that $\sum_{ Q(x^{[1]})= a'} \exp (\frac{2\pi
i}{m_d} \xi_d \cdot x^{[1]}$ is a (non-trivial) eigenvalue of a
Cayley's graph defined by $Q$ on $\Z_{m_d}^2$, where $m_d=m/d$.
Thus, by the induction hypothesis,

$$| \sum_{ Q(x^{[1]})= a'} \exp (\frac{2\pi
i}{m_d} \xi_d \cdot x^{[1]} )| \le g(m/d) \frac{m/d}{\gamma
(m/d)^{1/2}} \le g(m/d) \frac{m/d} {\gamma (m)^{1/2}}. $$

This implies

\begin{equation} \label{equSd}
|S_d| \le g(m/d) \frac{m^2} {\gamma (m)^{1/2} }.
\end{equation}

By \eqref{equ9}, \eqref{equS0}, \eqref{equSd} and the triangle
inequality

$$m \lambda_{\xi} \le \frac{m^2} {\gamma (m)^{1/2} }\big(\tau (m) +
\sum_{d \in \Omega (m)} g(m/d) \big) = g(m) \frac{m^2}
{\gamma(m)^{1/2}}
$$

\noindent completing the proof.

\end{document}